\newtheorem{theorem}{Theorem}[section]
\newtheorem{lemma}[theorem]{Lemma}
\newtheorem{proposition}[theorem]{Proposition}
\theoremstyle{definition}
\newtheorem{definition}[theorem]{Definition}
\theoremstyle{remark}
\numberwithin{equation}{section}
\begin{document}

\title{Finite type invariants of words and Arnold's invariants}
\author{Mikihiro Fujiwara}
\address{Department of Mathematical and Computing Sciences, 
Tokyo Institute of Technology, Tokyo 152-8552, Japan}
\email{fujiwar5@is.titech.ac.jp}

\keywords{immersed curves, finite type invariant, words}

\begin{abstract}
We define a new finite type invariant for stably homeomorphic class 
of curves on compact oriented surfaces without boundaries and 
extend to a regular homotopy invariant for spherical curves.
\end{abstract}

\maketitle

\section{Introduction}
This paper is motivated by N. Ito's work in  [3]  which is based on
the following three existing theories.

First of all,
V. I. Vassiliev developed in [8] a theory of finite type invariants
of knots, also known as Vassiliev invariants, which is conjectured
to classify knots.
Secondly,
V. Turaev suggested that words and their topology could
be an effective coding for virtual knots in  [6].
And finally,
V. I. Arnold developed a theory of invariants for generic curves
on the plane in  [1, 2]  which obeys some rules under regularly
homotopic change.
We should note that M. Polyak reconstructed Arnold's invariants
in terms of finite type invariants of plane curves.

Ito defined finite type invariants of words called  $SCI_n$  in [3],
and showed that they become a complete invariant for stable
homeomorphism classes and also that Arnold's invariants are
of first order.
The purpose of this paper is to provide a very simple 
frame to define finite type invariants of words by
introducing a new type of intersection, called singular intersection,
which plays intermediate role between an actual and virtual
intersections, and show that our finite type invariants contain all
the information Ito's finite type invariants have, and provide
a clearer relation to Arnold's invariant.

The papers consists of 4 parts.
The first part (Sections 2 - 4) is devoted to the basics.
We define curves, signed words and study the relation between them. 
In the second part (Section 5)
we introduce the new type of intersection, called singular intersection.
In the third part (Sections 6 - 7) 
we define a new type of finite type invariants of words and discuss the structure.
In the fourth part (Sections 8) 
we show how to extend finite type invariants of spherical curves and study relation to Arnold's invariants.

\section{Curves}
A curve will be a generic immersion from an oriented circle to a closed oriented surface. 
Here generic means that the curve has only finite set of self-intersections, 
which are all transversally double points.
Triple points and self-tangencies are not allowed.
Every curve has a regular neighborhood.
This is a narrow immersed band whose core is the curves in the surface.
A pointed curve is a curve endowed with a base point, which is not on the singular points.  

Two curves are stably homeomorphic 
if there is a homeomorphism between their regular neighborhoods 
mapping the first curve onto the second one preserving the orientation of the curves and the surfaces.  
Similarly, two pointed curves are pointed stably homeomorphic 
if two curves are stably homeomorphic preserving their base points.

\section{Signed words}
Let $\mathcal{A}$ be a set of letters.
A word of length $n$ in $\mathcal{A}$ is a mapping from ${\hat n} = \{ 1,2,3,...,n\}$ to $\mathcal{A}$ .
A Gauss word is a word where each letter of $\mathcal{A}$ appears in the word exactly two times.
A Gauss word is a signed word if the letters in the associated word naturally acquire signs $\pm$. 
To simplify notation, we omit signs and put bars on the letters of minuses.  
Thus instead of writing $A^+B^-A^+B^-$ we write $A\bar{B}A\bar{B}$ .
By convention, there is one empty word $\phi$ of length $0$.

Two signed words $w$ in $\mathcal{A}$ , $w^\prime $ in $\mathcal{A}^\prime$ are isomorphic 
if there is a bijection $f:\mathcal{A}\to \mathcal{A}^\prime$ such that $w^\prime = fw$ .
Two signed words are isomorphic if two words are isomorphic preserving sings of letters.
We denote by $\mathcal{W}_{n}$ the set generated by all the isomorphism classes of signed words of length $2n$.  
Put $\mathcal{W}=\cup_{n\geq 0}\mathcal{W}_{n}$.
The isomorphism classes $\mathcal{W}$ of signed word bijectively correspond 
to stable homeomorphism classes of pointed curves on a surface \cite{T2}.

Sub-words of a signed word are given by eliminating some letters from it.
Sub-words of signed words on $\mathcal{A}$ given by eliminating letters $\mathcal{B} \subset \mathcal{A}$ 
are singed words on $\mathcal{A} - \mathcal{B}$.

\begin{definition}
The {\it cyclic equivalence} $\sim$ of signed words is defined by the relation by generated by
\[A^\pm xA^\pm y \sim xA^\mp yA^\mp\]
where $A$ is a letter, $xy$ is a sub-word.
\end{definition}
Let us denote cyclic equivalence class of signed word by $W_{n}/\sim$, 
which bijectively corresponds to stable homeomorphism class of curves on surface \cite{T2}.

\section{Curves as signed words}
Consider a pointed curve on a surface.  
Label its crossings in an arbitrary way by different letters $A_1,A_2,...,A_n$ 
where $n$ is the number of the crossings.
The Gauss word of the curve is obtained by moving along the curve starting at the base point 
and writing down the letters as we encounter them, 
finishing when we get back to the base point.
The resulting word, $w$ on the alphabet $\mathcal{A}=\{ A_1,A_2,...,A_n\}$ contains every letter twice.
To make $w$ a signed word, consider the crossing of the curve labeled $A_i$.
If, when moving along the curve as above, we first traverse this crossing from the bottom-left to the top-right,
then $A_i^-$, otherwise $A_i^+$, see Figure \ref{fig:cross}, 
where the orientation of the ambient surface is counterclockwise.
The trivial curve corresponds to the signed word $\phi$.
We assign to our curve the class of this signed word in $\mathcal{W}_{n}$.

A different choice of the labeling of the crossing gives an isomorphic signed word.  
If the curve is changed by a stable homeomorphism, then the associated signed word does not change, 
since it is defined by the behavior of the curve in its regular neightborhood.

\begin{figure}[h]
\begin{center}
\includegraphics{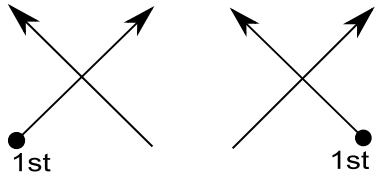}
\caption{On the left $A_i^-$ and on the right $A_i^+$}
\label{fig:cross}
\end{center}
\end{figure}

\section{Singular curves}
\subsection{Virtual crossings}
We may allow {\it virtual} (or {\it unlabeled}) crossings, 
and denote it by a crossing with a small circle around it.
Such crossings do not contribute at all to the associated signed word.
The idea is not that a virtual crossing is just an ordinary graphical vertex,
but rather that the virtual crossing is not actual crossings.

If we draw a non-planar curve in the plane,
it necessarily acquires virtual crossings. 
These crossings are not a part of the structure of the curve itself.
They are artifacts of the drawn of the graph in the plane.

Let $c$ be a curve on surface, and $w$ be a signed word corresponding to $c$.
Getting sub-words of $w$ is equal to changing double points of $c$ to virtual crossings.

\subsection{Singular crossings}
We introduce the crossings called {\it singular} crossings 
as intermediate crossings between actual crossings and virtual crossings, denote as a crossing with a small box around it.  
A singular curve is a curve with singular crossings.

We call a signed word corresponding to a singular curve a singular signed word.
To get signed words corresponding to singular curves, 
label a singular crossing by the letter $A_i^*$ instead of $A_i$.
The letters such as $A^*$(or $\bar{A^*}$) are called singular letters.
For a signed word $w$ without singular letter, 
denote $w^*$ the word changing all letters of $w$ to singular letters saving signs.

\begin{figure}[h]
\begin{center}
\includegraphics{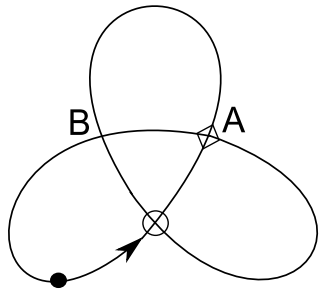}
\caption{The singular curve corresponding to the signed word $\bar{A}^*B\bar{A}^*B$}
\label{fig:singular}
\end{center}
\end{figure}

\section{Definition of finite type invariants}

Let $\mathcal{X}_n$ denote the set of isomorphism class of singular signed words with $n$ singular letters.
By definition, $\mathcal{X}_0=\mathcal{W}$ is the set of non-singular signed words.
Put $\mathcal{X}=\cup_{n\geq 0}\mathcal{X}_n$.

\begin{definition}
A cyclic equivalence invariant $v$ of signed words is a function of a signed word 
in a field $\mathbb{F}$, e.g. $\mathbb{Q}$, $\mathbb{R}$ or $\mathbb{C}$,
which takes equal values on cyclically equivalent signed words.  
In other words, it is a mapping $v$ : $\mathcal{W}/\sim \to \mathbb{F}$
from $\mathcal{W}$ to $\mathbb{F}$.
\end{definition}

\begin{definition}
Given a cyclic equivalence invariant $v$, 
we define its prolongation $\tilde{v}$ by the rule
\begin{eqnarray*}
\tilde{v}\mid_{\mathcal{X}_0} &=& v\\
\tilde{v}(A^*xA^*y) &=& \tilde{v}(AxAy)-\tilde{v}(xy). 
\end{eqnarray*}
\end{definition}

\begin{definition}
A function $v$ is a finite type invariant of order $n$ 
if its prolongation vanishes on all singular words with more than $n$ singular letters:
\[ v(\mathcal{X}^{n+1})=0\]
(and hence $v(\mathcal{X}^m)=0$ for all $m > n$).
\end{definition}

{\bf Notation:} $\mathcal{V}_n$ stands for the set of $\mathbb{F}$-valued invariants of order $\geq n$, 
it will be vector space over $\mathbb{F}$.

\section{Structure of finite type invariant of words}

\begin{proposition}
$\mathcal{V}_0 = \{ {\rm consts} \}$ and hence {\rm dim} $\mathcal{V}_0=1$.
\end{proposition}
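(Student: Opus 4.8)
The plan is to establish the two inclusions $\{\mathrm{consts}\}\subseteq\mathcal{V}_0$ and $\mathcal{V}_0\subseteq\{\mathrm{consts}\}$, after which the dimension count is immediate, since the constant functions form a one-dimensional subspace of the $\mathbb{F}$-valued functions on $\mathcal{W}/\!\sim$.

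First I would unwind what membership in $\mathcal{V}_0$ means. A function $v$ lies in $\mathcal{V}_0$ exactly when it has order $0$, i.e. when its prolongation $\tilde v$ vanishes on $\mathcal{X}^1$, that is on every singular signed word carrying at least one singular letter. Feeding the prolongation rule a word with a single singular letter gives
\[ 0=\tilde v(A^* x A^* y)=\tilde v(AxAy)-\tilde v(xy)=v(AxAy)-v(xy), \]
so that $v(AxAy)=v(xy)$ for every such configuration. In words: $v$ is unchanged when a crossing is deleted, that is, when a letter and its mate are erased from the word.

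The heart of the argument is the direction $\mathcal{V}_0\subseteq\{\mathrm{consts}\}$, which I would prove by induction on the length of the word, i.e. on the number of crossings. The empty word $\phi$ is the base case. Given a signed word $w$ with $n\geq 1$ letters, I would pick any letter $A$ occurring in $w$ and use the cyclic equivalence $\sim$ to rotate $w$ to a representative beginning with that letter; since each letter of a Gauss word occurs exactly twice, this representative automatically has the form $A^{\pm}xA^{\pm}y$. Because $v$ is a cyclic equivalence invariant it takes the same value on $w$ and on this representative, and the deletion relation from the previous step then yields $v(w)=v(xy)$. As $xy$ has $n-1$ letters, the inductive hypothesis gives $v(xy)=v(\phi)$, hence $v(w)=v(\phi)$; thus $v$ is constant.

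The reverse inclusion is routine: for a constant $c$ one computes $\tilde v(A^*xA^*y)=c-c=0$, and a short induction on the number of singular letters propagates this to show that $\tilde v$ vanishes on all of $\mathcal{X}^1$, so $c\in\mathcal{V}_0$. Combining the inclusions gives $\mathcal{V}_0=\{\mathrm{consts}\}\cong\mathbb{F}$ and hence $\dim\mathcal{V}_0=1$. The one place demanding care is the cyclic reduction in the inductive step: one must confirm that every nonempty signed word really does admit a cyclic representative of the shape $A^{\pm}xA^{\pm}y$, and that the deletion identity $v(A^{\pm}xA^{\pm}y)=v(xy)$ holds irrespective of the common sign of the two occurrences of the chosen letter. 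This sign bookkeeping, together with checking that deleting a crossing is well defined on cyclic classes, is the main thing to verify, though I expect it to be bookkeeping rather than a genuine obstacle.
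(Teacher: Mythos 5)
Your proposal is correct and takes essentially the same route as the paper's proof: the skein relation forces $v(AxAy)=v(xy)$, so $v$ is unchanged under deleting letters and must therefore be constant. You simply make explicit what the paper's terse argument leaves implicit --- the induction on word length, the cyclic rotation (with its sign flip) needed to bring an arbitrary letter to the front so the relation applies, and the easy reverse inclusion that constants really do lie in $\mathcal{V}_0$.
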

\begin{proof}
By definition, $v \in \mathcal{V}_0$ means that 
the value of $v$ on any signed word with at least one singular letter is $0$.
Due to skein relation
\[v(AxAy)-v(xy) = v(A^*xA^*y) = 0 \]
which means that the value of $v$ does not change when any letters adds (or removes).
Therefore, $v$ is a constant function.
\end{proof}

\begin{proposition}
{\rm dim} $\mathcal{V}_1=2$.
\end{proposition}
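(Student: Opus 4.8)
The plan is to exhibit an explicit basis of $\mathcal{V}_1$ consisting of the constant invariant $1$ and the crossing-number invariant $n$, where $n(w)$ denotes the number of distinct letters of $w$ (equivalently half the length of $w$, i.e.\ the number of crossings of the corresponding curve). First I would prove the lower bound $\dim \mathcal{V}_1 \geq 2$. Since $\mathcal{V}_0 \subseteq \mathcal{V}_1$, the constant $1$ lies in $\mathcal{V}_1$. The function $n$ is clearly a cyclic equivalence invariant, and its prolongation satisfies $\tilde n(A^* x A^* y) = n(AxAy) - n(xy) = 1$ on every $1$-singular word; resolving a second singular letter then yields $0$ on every $2$-singular word, so $n \in \mathcal{V}_1$, while $\tilde n(A^*A^*) = 1 \neq 0$ shows $n \notin \mathcal{V}_0$. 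As $1$ and $n$ take different values on $\phi$ and on $AA$, they are linearly independent, and $\dim \mathcal{V}_1 \geq 2$.

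The bulk of the work is the upper bound $\dim \mathcal{V}_1 \leq 2$, for which I would show that every $v \in \mathcal{V}_1$ has the form $v = v(\phi)\cdot 1 + c\cdot n$ for some $c \in \mathbb{F}$. The key lemma is that the first difference $\tilde v$ is constant on all $1$-singular words: for any $1$-singular word $A^* x A^* y$ I claim $\tilde v(A^* x A^* y) = \tilde v(A^*A^*)$. To see this, let $B$ be any non-singular crossing occurring in $x,y$ and consider the $2$-singular word obtained by also marking $B$ as singular. Because $v$ has order $1$, its prolongation vanishes on this word, and expanding the single letter $B^*$ by the skein rule gives $\tilde v(\cdots B \cdots) = \tilde v(\cdots \hat B \cdots)$; that is, deleting both occurrences of the crossing $B$ leaves $\tilde v$ unchanged. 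Iterating this deletion removes every non-singular crossing and reduces the word to $A^*A^*$, proving the claim; call this common value $c$.

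Finally I would feed the lemma back into the prolongation rule. For any signed word $w$ with at least one crossing, choose a crossing $A$ and write $w$ cyclically as $AxAy$; then the skein relation together with the lemma gives $v(w) - v(w\setminus A) = \tilde v(A^* x A^* y) = c$, where $w \setminus A$ denotes $w$ with the crossing $A$ erased. Inducting on the number of crossings yields $v(w) = v(\phi) + c\, n(w)$, independent of every intermediate choice, so $v \in \mathrm{span}\{1, n\}$. Combined with the lower bound this establishes $\dim \mathcal{V}_1 = 2$.

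The step I expect to be the main obstacle is the reduction lemma of the second paragraph: it requires carefully positioning the two occurrences of the auxiliary crossing $B$ by cyclic equivalence so that the skein rule for $B^*$ applies verbatim while the singular letter $A^*$ is carried along untouched, and checking that the sign flips built into the relation $A^\pm x A^\pm y \sim x A^\mp y A^\mp$ obstruct neither the deletion of $B$ nor the final identification of the fully reduced word with $A^*A^*$. Once this bookkeeping is in place, the remaining induction is routine.
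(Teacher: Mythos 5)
Your proof is correct and takes essentially the same route as the paper: both arguments reduce the prolongation on $1$-singular words to its single value on $(AA)^*$ by deleting non-singular letters (using the order-$1$ vanishing on $2$-singular words), and both conclude $v(w) = v(\phi) + n(w)\,v((AA)^*)$, identifying the basis $\{1, n\}$. Your write-up simply makes explicit the lower bound (linear independence and membership of $n$ in $\mathcal{V}_1$) and the sign/basepoint bookkeeping that the paper compresses into ``a similar argument as above'' and ``an easy argument.''
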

\begin{proof}
By a similar argument as above, among singular signed word $w \in \mathcal{X}_1$ 
the value is invariant under adding (or removing) letters, 
and depends only on the value of $(AA)^*$ in this case.
Ont the other hand, once we choose the value of $(AA)^*$, 
we get a finite type invariant of degree $1$.
It follows from an easy argument
that a finite type invariant $v$ of degree $1$ on any signed word $w$ 
can be presented by linear sum,
\[v(w) = a_0v(\phi) + a_1(w)v((AA)^*) ,\]
where $a_0$ is a constant and $a_1(w)$ is the number of letters,
which is a finite type invariant of degree $1$.
\end{proof}

\begin{lemma}
Let $w_n$ be a signed words of length $2n$ ($n=0,1,2,...$), 
and $v$ is a finite type invariant of degree $1$.  Then,
\[v(w_n) = v(\phi) + nv((AA)^*).\]
\end{lemma}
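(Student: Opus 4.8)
The plan is to argue by induction on $n$, stripping off one letter at a time by means of the skein relation that defines the prolongation $\tilde{v}$. The base case $n=0$ is immediate: here $w_0=\phi$, and the asserted formula reads $v(\phi)=v(\phi)$. For the inductive step I will need one auxiliary fact, which I isolate first.

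The auxiliary fact, already implicit in the proof of the preceding proposition, is that for a finite type invariant $v$ of degree $1$ the prolongation $\tilde{v}$ takes the single value $\tilde{v}(A^*A^*)=v((AA)^*)$ on \emph{every} singular word having exactly one singular letter, independently of the surrounding non-singular letters. To see this, take any such word and pick a non-singular letter $B$ in it; after a cyclic rotation (which $v$, and hence $\tilde{v}$, respects, the two occurrences of $B$ always sharing a sign) I may write it as $Bx'By'$ with the singular letter lying inside $x'y'$. The skein relation gives $\tilde{v}(Bx'By')=\tilde{v}(B^*x'B^*y')+\tilde{v}(x'y')$, and the first term now carries two singular letters, so it vanishes because $v$ is of degree $1$. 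Thus deleting a non-singular letter does not change $\tilde{v}$, and iterating until only the singular letter survives leaves exactly $A^*A^*$.

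Now for the inductive step, let $w_n$ have length $2n$ and choose any letter $A$ occurring in it. Since $v$ is a cyclic equivalence invariant and both copies of $A$ carry the same sign, I may rotate $w_n$ to the representative $AxAy$, where $xy=w_{n-1}$ is the signed word of length $2(n-1)$ obtained by erasing $A$. Applying the skein relation and then the auxiliary fact to the one-singular-letter term, I get
\[
v(w_n)=v(AxAy)=\tilde{v}(A^*xA^*y)+v(xy)=v((AA)^*)+v(w_{n-1}).
\]
Feeding in the inductive hypothesis $v(w_{n-1})=v(\phi)+(n-1)v((AA)^*)$ yields $v(w_n)=v(\phi)+n\,v((AA)^*)$, which closes the induction.

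I expect the only genuinely delicate point to be the auxiliary fact of the second paragraph, namely that $\tilde{v}$ on a one-singular-letter word is truly independent of the ambient non-singular letters; this is where the degree-$1$ hypothesis and the interplay of the skein relation with cyclic equivalence are doing all the work. Everything else, including the peeling step and the sign bookkeeping, is routine once that independence is in hand.
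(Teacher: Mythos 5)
Your proof is correct and takes essentially the same approach as the paper's: induction on $n$, peeling off one letter via the skein relation $v(AxAy)=\tilde{v}(A^*xA^*y)+v(xy)$, together with the key fact that a degree-$1$ invariant takes the value $v((AA)^*)$ on every word with exactly one singular letter. The only real difference is that you prove this auxiliary fact explicitly, whereas the paper imports it from the preceding proposition (dim\,$\mathcal{V}_1=2$); your version is thus more self-contained, and it also avoids the paper's harmless slip of dropping $v(\phi)$ in the intermediate computation.
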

\begin{proof}
The statement is true for $n=0$, since $v(w_0) = v(\phi)$.
Assume the statement is true for $n=m$, i.e.,
\[v(w_m) = v(\phi ) + mv((AA)^*).\]
Then by the equation a letter of $w_{m+1}$, $v(A^*xA^*y) = v(AxAy) - v(xy)$,we have
\[v(w_{m+1}) = v((AA)^*) + v(w_m) = v((AA)^*) + mv((AA)^*) = (m+1)v((AA)^*).\]
Thus we have
\[v(w_n) = v(\phi) + nv((AA)^*).\]
\end{proof}

\begin{proposition}
{\rm dim} $\mathcal{V}_2=6$.
\end{proposition}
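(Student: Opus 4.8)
The plan is to pin down $\mathcal{V}_2$ by the same ``generating value'' method used for $\mathcal{V}_0$ and $\mathcal{V}_1$, pushed one degree further. The starting point is to invert the skein relation: reading $\tilde v(AxAy)=\tilde v(A^*xA^*y)+\tilde v(xy)$ as ``keep the letter singular \emph{or} delete it'' and iterating over every letter of a non-singular word $w$, I obtain the expansion
\[ v(w)=\sum_{S\subseteq \mathcal{A}(w)} \tilde v\big((w|_S)^*\big), \]
where $w|_S$ is the sub-word retaining only the letters in $S$ and $(\,\cdot\,)^*$ makes all its letters singular. For $v\in\mathcal{V}_2$ the prolongation vanishes on $\mathcal{X}_3$, so only the subsets $S$ with $|S|\le 2$ contribute, and
\[ v(w)=v(\phi)+\sum_{A}\tilde v\big((w|_{A})^*\big)+\sum_{\{A,B\}}\tilde v\big((w|_{\{A,B\}})^*\big). \]

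Next I would reduce each summand to a ``core''. Exactly as in the previous proofs, the order-$\le 2$ hypothesis forces $\tilde v$ to be unchanged when a non-singular letter is erased from a word in $\mathcal{X}_1$ or $\mathcal{X}_2$ (turn that letter singular to land in $\mathcal{X}_2$ resp.\ $\mathcal{X}_3$, where $\tilde v=0$, then apply the skein relation). Hence every degree-one term equals the single value $c_1:=v((AA)^*)$ from the preceding Lemma, recovering the term $n\,c_1$, and every degree-two term depends only on the cyclic-equivalence and isomorphism class of the length-four, fully singular word $(w|_{\{A,B\}})^*$. Conversely, any assignment of values to these classes, together with $v(\phi)$ and $c_1$, defines a function via the displayed formula; it is automatically isomorphism-invariant and, since a cyclic move on $w$ induces the corresponding cyclic move on each two-letter sub-word while fixing the letter count, it is cyclic-invariant and of order $\le 2$. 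Thus
\[ \dim\mathcal{V}_2 = 2 + \#\{\,\text{cyclic and iso classes of length-four fully singular two-letter words}\,\}. \]

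It then remains to enumerate those classes, which is where the real work lies. I would first note that there are two underlying Gauss words on two letters, the linked $ABAB$ and the unlinked $AABB$, and then equip each with the four sign choices and quotient by the relabelling $A\leftrightarrow B$ together with the cyclic equivalence $A^\pm xA^\pm y\sim xA^\mp yA^\mp$. The delicate point is that this cyclic move simultaneously rotates the word \emph{and} flips a sign, so it mixes different sign patterns; carrying out the orbit computation carefully, I expect the linked family to collapse to a single class and the unlinked family to three classes, for a total of four. Granting this count, the displayed formula gives $\dim\mathcal{V}_2=2+4=6$.

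The main obstacle I anticipate is precisely this sign-sensitive bookkeeping: I must make sure the orbits under the combined action are computed correctly (so that, for instance, $ABAB$ and $A\bar B A\bar B$ are correctly identified or separated), and I must confirm that the four surviving core values are genuinely independent, i.e.\ that there are no further linear relations among symbols beyond cyclic invariance. The latter is guaranteed here by the converse construction of the second step rather than by any $4T$-type relation, so once the enumeration is done the independence is automatic.
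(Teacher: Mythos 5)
Your proposal follows essentially the same route as the paper: expand an order-$2$ invariant over fully singular cores, observe that only $\phi$, the one-letter core $(AA)^*$, and the two-letter cores can contribute, and then count the two-letter cores up to isomorphism and cyclic equivalence. The paper's own proof is in fact terser than yours---it simply lists the four values $v((AABB)^*)$, $v((AA\bar{B}\bar{B})^*)$, $v((A\bar{B}\bar{B}A)^*)$, $v((ABAB)^*)$ ``since there is no relation'' and writes down the linear expansion---so your sub-word expansion $v(w)=\sum_{S}\tilde v\bigl((w|_S)^*\bigr)$ and your converse (realizability) step supply justification the paper omits. Your anticipated orbit count is also correct and matches the paper's list: under the sign-flipping cyclic move $A^{\pm}xA^{\pm}y\sim xA^{\mp}yA^{\mp}$ the linked words form a single class $ABAB\sim A\bar{B}A\bar{B}\sim\bar{A}\bar{B}\bar{A}\bar{B}\sim\bar{A}B\bar{A}B$, while the unlinked words split into exactly three classes, namely $\{AABB,\ \bar{A}BB\bar{A}\}$, $\{AA\bar{B}\bar{B},\ \bar{A}\bar{B}\bar{B}\bar{A},\ \bar{A}\bar{A}BB,\ ABBA\}$, and $\{\bar{A}\bar{A}\bar{B}\bar{B},\ A\bar{B}\bar{B}A\}$; hence $\dim\mathcal{V}_2=2+4=6$.

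One step of your reduction is false as stated, although it does not damage the proof. You claim that for $v$ of order $\le 2$, erasing a non-singular letter from a word of $\mathcal{X}_1$ leaves $\tilde v$ unchanged ``because turning it singular lands in $\mathcal{X}_2$, where $\tilde v=0$''; but an order-$2$ invariant need not vanish on $\mathcal{X}_2$ (that is exactly where its interesting values live). For instance $\tilde v(A^*BA^*B)=\tilde v((ABAB)^*)+\tilde v(A^*A^*)$, and the first term is nonzero for the invariant counting linked pairs, so erasing $B$ does change $\tilde v$. The claim is true only one degree up, for words of $\mathcal{X}_2$. Fortunately your expansion produces only fully singular terms $(w|_S)^*$, so no erasure of non-singular letters is ever needed; what the upper bound actually requires is that $\tilde v$, restricted to fully singular words, is constant on cyclic-equivalence classes---this is what collapses the twelve isomorphism classes of signed two-letter words to four, and the two one-letter classes $(AA)^*$, $(\bar{A}\bar{A})^*$ to one. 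You can get this by inverting your own expansion, $\tilde v(u^*)=\sum_{S\subseteq\mathcal{A}}(-1)^{|\mathcal{A}|-|S|}v(u|_S)$ for $u$ a word on the alphabet $\mathcal{A}$, and then applying the observation you already use in the converse direction: a cyclic move on $u$ induces the corresponding cyclic move (or the identity) on each sub-word $u|_S$, so cyclic invariance of $v$ passes to $\tilde v$ on fully singular words. With that substitution, and with the orbit computation actually executed as above, your argument is complete.
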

\begin{proof}
By a similar argument as above, among singular signed word $w \in \mathcal{X}_2$ 
the value is invariant under adding (or removing) letters;
there is four possible values, 
$v((AABB)^*)$,$v((AA\bar{B}\bar{B})^*)$,$v((A\bar{B}\bar{B}A)^*)$,$v((ABAB)^*)$ in this case,
since there is no relation.
The value of a finite type invariant of degree $2$ on any signed word $w$ 
can be presented by linear sum,
\begin{eqnarray*}
v(w) = a_0v(\phi) + a_1(w)v((AA)^*) + b_1(w)v((AABB)^*) + b_2(w)v((AA\bar{B}\bar{B})^*)\\
 + b_3(w)v((A\bar{B}\bar{B}A)^*) + b_4(w)v((ABAB)^*).
\end{eqnarray*}
\end{proof}

\begin{theorem}\label{th:comp}
A finite type invariant is a complete invariant for cyclic equivalence class of signed word.
\end{theorem}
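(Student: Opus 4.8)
The plan is to prove completeness constructively: for every singular signed word $D$ I will build an explicit finite type invariant that counts occurrences of $D$ as a sub-word, and then show that the ``top'' such counter isolates a given word among all cyclic equivalence classes. Throughout, for a signed word $w$ on letters $\{A_1,\dots,A_n\}$ and a subset $S$ of these letters, let $w_S^*$ denote the pure singular word obtained by deleting the letters outside $S$ (turning them into virtual crossings) and marking every remaining letter as singular; this is exactly a degree-$|S|$ singular word in the sense of the earlier propositions. First I would establish a resolution formula. Inverting the prolongation rule as $v(AxAy)=\tilde v(A^*xA^*y)+v(xy)$ and resolving the crossings of $w$ one at a time, induction on $n$ gives
\[ v(w) = \sum_{S \subseteq \{A_1,\dots,A_n\}} \tilde v\bigl(w_S^*\bigr). \]
Consequently the values of any finite type invariant on honest words are determined by its prolongation on pure singular words, and if $v$ has order $k$ only the terms with $|S|\le k$ survive.

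Second, for each pure singular word $D$ I would define the counting invariant
\[ v_D(w) = \#\{\, S \subseteq \{A_1,\dots,A_n\} : w_S^* \cong D \,\}, \]
where $\cong$ means isomorphism of singular signed words together with cyclic equivalence. The key computation is that its prolongation satisfies $\tilde v_D(E)=\delta_{D,E}$ on every pure singular word $E$: expanding $\tilde v_D(E)$ over the singular letters of $E$ via the prolongation rule yields an inclusion--exclusion sum $\sum_{T}(-1)^{|E|-|T|}v_D(E_T)$, and since $\sum_{T\supseteq S}(-1)^{|E|-|T|}$ vanishes unless $S$ is the full letter set of $E$, the sum collapses to the single term in which every letter of $E$ is singularized, namely $[\,E\cong D\,]$. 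The same collapse, carried out for a singular word that also carries real letters, shows $\tilde v_D$ vanishes on every singular word with more than $\deg D$ singular letters; hence $v_D\in\mathcal V_{\deg D}$ is genuinely finite type of order $\deg D$.

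One then checks that $v_D$ is a cyclic equivalence invariant: the generating move $A^\pm x A^\pm y \sim x A^\mp y A^\mp$ induces a sign-respecting bijection on the subsets $S$ under which $w_S^*$ is carried to a cyclically equivalent word, so $v_D(w)=v_D(w')$ whenever $w\sim w'$. This freeness---the absence of any relation among singular words beyond isomorphism, cyclic equivalence, and deletion of letters---is exactly what produces the counts $1,2,6$ of the earlier propositions, in contrast with the $4T$-type relations of the knot-theoretic theory. Finally I would deduce completeness by a triangularity argument ordered by length. Suppose $w\not\sim w'$. If $\deg w' > \deg w$, then $v_{(w')^*}(w')\ge 1$ while $v_{(w')^*}(w)=0$, since $w$ has too few letters to contain a degree-$\deg w'$ sub-word. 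If instead $\deg w = \deg w' = n$, consider $v_{w^*}$: any $S$ with $(w')_S^*\cong w^*$ must use all $n$ letters of $w'$, forcing $(w')^*\cong w^*$ and hence $w'\sim w$, a contradiction, so $v_{w^*}(w')=0<1\le v_{w^*}(w)$. In either case a finite type invariant separates $w$ from $w'$.

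The main obstacle I anticipate is the bookkeeping in the construction of $v_D$---verifying simultaneously that it is well defined on cyclic equivalence classes and finite type of the advertised order---since this is where the signs of the cyclic move and the inclusion--exclusion over singular letters must be reconciled. Granting that, the resolution formula and the separation argument are immediate, and one obtains the stronger statement that the $v_D$ form a basis dual to the singular words, so that the full collection of finite type invariants recovers the cyclic equivalence class of $w$ exactly.
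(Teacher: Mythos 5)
Your proposal is correct and is in essence the argument the paper intends: the paper's entire proof is the single assertion that a degree-$n$ invariant $v(w)$ ``has the information of all sub-words of $w$,'' which is exactly your resolution formula $v(w)=\sum_{S}\tilde v\bigl(w_S^*\bigr)$ combined with the freeness of prolongation values on pure singular words. Your counting invariants $v_D$, the inclusion--exclusion identity $\tilde v_D(E)=\delta_{D,E}$, the check of cyclic-equivalence invariance, and the triangularity/separation step supply precisely the details the paper omits (and in effect reconstruct the sub-word-counting invariants $SCI_n$ of Ito that the paper invokes in the lemma immediately following this theorem).
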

\begin{proof}
Let $w$ be a signed word of length $2n$, and $v$ be a finite type invariant of degree $n$.
The finite type invariant $v(w)$ has the information of all sub-words of $w$.
\end{proof}

\begin{theorem}
A finite type invariant is a complete invariant 
for stable homeomorphism classes of curves on compact oriented surfaces, without boundaries.
\end{theorem}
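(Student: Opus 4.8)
The plan is to leverage the two bijective correspondences already quoted in the paper together with the completeness result for cyclic equivalence classes just proved in Theorem~\ref{th:comp}. Recall from Section~3 that the isomorphism classes $\mathcal{W}$ of signed words correspond bijectively to stable homeomorphism classes of pointed curves, while cyclic equivalence classes $\mathcal{W}/\!\sim$ correspond bijectively to stable homeomorphism classes of (unpointed) curves. Since Theorem~\ref{th:comp} asserts that a finite type invariant $v$ of sufficiently high degree separates cyclic equivalence classes of signed words, the strategy is simply to transport that separation statement across the correspondence $\mathcal{W}/\!\sim\ \longleftrightarrow\ \{\text{curves}\}$.

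\medskip

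First I would fix two curves $c$ and $c'$ on compact oriented surfaces without boundaries, and observe that each gives rise to a well-defined element of $\mathcal{W}/\!\sim$: choosing a base point produces a signed word as in Section~4, changing the base point moves along the curve and hence changes the word only by the cyclic relation of Definition~2.1, and changing the labeling gives an isomorphic word, so the cyclic equivalence class is independent of all choices. Next I would note that a cyclic equivalence invariant $v$ (Definition~6.1) is by construction constant on each class in $\mathcal{W}/\!\sim$, so composing $v$ with the map $\{\text{curves}\}\to\mathcal{W}/\!\sim$ yields a genuine invariant of stable homeomorphism classes of curves. It therefore remains to check that this composed invariant is complete, i.e. that it separates non-stably-homeomorphic curves.

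\medskip

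The heart of the argument is then the following chain of implications. Suppose $c$ and $c'$ have equal values under every finite type invariant. By Theorem~\ref{th:comp}, equal values of a degree-$n$ finite type invariant force the associated signed words to be cyclically equivalent, so $c$ and $c'$ determine the same class in $\mathcal{W}/\!\sim$. By the bijection cited from \cite{T2} between $\mathcal{W}/\!\sim$ and stable homeomorphism classes of curves, this means $c$ and $c'$ are stably homeomorphic. Conversely, stably homeomorphic curves yield equal words (as already observed in Section~4, the word depends only on the regular neighborhood) and hence equal invariants; this direction is immediate and records that finite type invariants are indeed invariants.

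\medskip

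The main obstacle I anticipate is not the topological transport, which is formal once the two bijections are granted, but rather the bookkeeping needed to see that the passage from a curve to its cyclic equivalence class is genuinely well-defined and natural. Concretely, one must verify that the base-point dependence is exactly absorbed by the cyclic relation $A^\pm x A^\pm y \sim x A^\mp y A^\mp$ of Definition~2.1 --- including the sign change $\pm \mapsto \mp$ that occurs when the base point crosses a strand --- and that this matches the sign convention of Figure~\ref{fig:cross}. Once this compatibility is confirmed, the theorem follows by combining Theorem~\ref{th:comp} with the correspondence of \cite{T2}, with no further computation required.
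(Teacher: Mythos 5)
Your proposal is correct and follows exactly the paper's own argument: combine Theorem~\ref{th:comp} (completeness for cyclic equivalence classes) with the bijection from \cite{T2} between $\mathcal{W}/\!\sim$ and stable homeomorphism classes of curves. The paper states this in a single sentence; your additional verification that the curve-to-word assignment is well-defined modulo base point and labeling is a sensible elaboration of the same route, not a different one.
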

\begin{proof}
The facts that each cyclic equivalence class of signed words is corresponding to a stably homeomorphism class of curves
and Theorem \ref{th:comp} imply this theorem.
\end{proof}

Also, following lemma derives from the definition of invariant $SCI_n$ in [3],
where $SCI_n$ is a invariant for immersed curves.

\begin{lemma}
A finite type invariant of degree $n$ is equivalent to $\sum_{k=0}^{n} SCI_k$. 
\end{lemma}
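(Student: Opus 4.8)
The plan is to make the author's finite type invariants and Ito's $SCI_k$ directly comparable by first producing a closed form for the prolongation $\tilde{v}$ on a fully singular word. Starting from a signed word $w$ with letters $A_1,\dots,A_n$ and applying the skein relation $\tilde{v}(A^*xA^*y)=\tilde{v}(AxAy)-\tilde{v}(xy)$ once for each singular letter, I would expand $\tilde{v}((w)^*)$ into a sum of $2^n$ terms, one for each subset $S$ of letters retained, with every removed letter contributing a factor $-1$. This gives
\[\tilde{v}((w)^*)=\sum_{S\subseteq\{1,\dots,n\}}(-1)^{n-|S|}\,v(w_S),\]
where $w_S$ denotes the sub-word of $w$ on the letters indexed by $S$. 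This is exactly the same expansion underlying the propositions for $n=0,1,2$.

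First I would invert this relation. The right-hand side is the finite difference of $v$ over the Boolean lattice of sub-words of $w$, so Möbius inversion yields the complementary identity
\[v(w)=\sum_{S\subseteq\{1,\dots,n\}}\tilde{v}((w_S)^*).\]
Next, if $v$ has degree $n$, every term with $|S|>n$ vanishes by definition, so only sub-words with at most $n$ letters survive. Since $\tilde{v}((w_S)^*)$ depends only on the isomorphism class of the singular word $(w_S)^*$, I would group the surviving sub-words by type and rewrite
\[v(w)=\sum_{k=0}^{n}\ \sum_{\tau}\ N_\tau(w)\,\tilde{v}(\tau^*),\]
where $\tau$ ranges over isomorphism types of signed words of length $2k$, $N_\tau(w)$ counts the sub-words of $w$ of type $\tau$, and $\tilde{v}(\tau^*)$ is a constant determined by $v$.

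Then I would match this expansion with the definition of $SCI_k$ in [3]. The invariant $SCI_k$ is, by construction, the bookkeeping of the sub-word counts $N_\tau(w)$ for $\tau$ of length $2k$; hence the inner sum at level $k$ is a fixed linear functional of $SCI_k$, so that $v$ factors through the data of $SCI_0,\dots,SCI_n$. Conversely, each $SCI_k$ is annihilated by the prolongation once more than $k$ singular letters are present, and so is itself a finite type invariant of degree $k\le n$. These two directions together give the asserted equivalence between a degree-$n$ finite type invariant and $\sum_{k=0}^{n}SCI_k$.

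The hard part will be the final matching step: pinning down the exact normalization of $SCI_k$ in [3] — in particular whether its sub-words are counted cyclically, and how signs and the base point enter — and checking that $N_\tau(w)$ as it appears above agrees with that normalization. One must also verify that $\tilde{v}$ is well defined on cyclic equivalence classes, so that the grouping by type in the third step is legitimate; this should follow from the compatibility of the skein relation with the cyclic relation $A^{\pm}xA^{\pm}y\sim xA^{\mp}yA^{\mp}$, but it requires explicit checking.
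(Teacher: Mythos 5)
Your proposal is correct in substance, and it takes a more explicit route than the paper does. The paper's own proof is a three-line sketch: it asserts that $SCI_m$ ``has the information of all sub-words of length $2m$,'' that a degree-$n$ finite type invariant likewise ``has the information of all sub-words,'' and then concludes by noting that both are complete invariants (in the sense of Theorem \ref{th:comp}, i.e.\ both separate all curves with at most $n$ crossings), so they must be equivalent. You replace this appeal to completeness with an explicit linear dictionary: the finite-difference expansion $\tilde{v}(w^*)=\sum_{S}(-1)^{n-|S|}v(w_S)$, its M\"obius inverse $v(w)=\sum_{S}\tilde{v}((w_S)^*)$, truncation of that sum using the degree-$n$ condition, and the observation that the $(k+1)$-fold difference of a length-$2k$ sub-word count vanishes, giving the converse inclusion. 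This buys you two things the paper's argument does not: the resulting identity $v(w)=\sum_{k\le n}\sum_{\tau}N_\tau(w)\,\tilde{v}(\tau^*)$ holds for words of arbitrary length, not merely for the curves with at most $n$ crossings on which the completeness argument operates, and it shows exactly which linear functionals of the $SCI_k$ data reproduce $v$. The two caveats you flag at the end are the genuine ones, and neither is addressed in the paper either: the individual counts $N_\tau(w)$ depend on the base point (only cyclically symmetrized counts descend to $\mathcal{W}/\sim$, which is how Ito's $SCI_k$ must be matched, e.g.\ by averaging over base points, legitimate over $\mathbb{Q}$), and the well-definedness of the prolongation $\tilde{v}$ (independence of the order in which singular letters are resolved, and compatibility with the cyclic move) requires the short check that $v(A^\pm xA^\pm y)=v(xA^\mp yA^\mp)$ forces the two resolutions to agree. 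Once those checks are written out, your argument is strictly more complete than the original.
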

\begin{proof}

Let $\Gamma$ be a immersed curve with $n$ crossings, 
and $w$ be a signed word corresponding to $\Gamma$.
$SCI_m(\Gamma)$ has the information of all sub-words of $w$ of length $2m$.
$\sum_{k=0}^{n}SCI_n(\Gamma)$ has the information of all sub-words of $w$, so does a finite type invariant $v(w)$.
They are complete invariants.
\end{proof}

\section{Finite type inveriants for spherical curves}

Let $\mathcal{W}_s$ be a set $\{ w \in \mathcal{W} \mid w ${\rm \ represents a curve on the sphere}$\}$ and 
$\mathcal{W}_s^*$ be $\mathcal{W}_s/\sim \cup (\mathcal{W}-\mathcal{W}_s)$.

\begin{definition}
A signed word invariant for $\mathcal{W}_s^*$ is a function of a signed word 
which takes equal values on cyclically equivalent signed words corresponding to spherical curves.
In other words, it is a mapping $v:\mathcal{W}_s^* \to \mathbb{F}$ from $\mathcal{W}_s^*$ to a field $\mathbb{F}$.
\end{definition}

Let $v$ be a finite type invariant of degree 2.
For a signed word $w$ for curves on the sphere, 
$v$ can be presented as a linear combination as follows,
\begin{eqnarray*}
v(w) &=&a_0v(\phi) + a_1(w)v((AA)^*) + b_1(w)v((AABB)^*) \\
& &+ b_2(w)v((AA\bar{B}\bar{B})^*)+ b_3(w)v((A\bar{B}\bar{B}A)^*) + c_1(w)v((ABAB)^*)\\
& &+ c_2(w)v((A\bar{B}A\bar{B})^*) + c_3(w)v((\bar{A}B\bar{A}B)^*) + c_4(w)v((\bar{A}\bar{B}\bar{A}\bar{B})^*).
\end{eqnarray*}

\begin{definition}[Regular homotopy of signed words] 
Three regular homotopy moves on signed words are defined as follows.
\begin{enumerate}
\item $xy \to A^\pm B^\mp xA^\pm B^\mp y$ .
$x$,$y$ are words without letters $A$,$B$.
Note that if $xy$ is a signed word on $\mathcal{A}$, 
then $A^\pm B^\mp xA^\pm B^\mp y$ is a signed word on $\mathcal{A}\cup \{A,B\}$.
\item $xy \to A^\pm B^\mp xB^\mp A^\pm y$.
$x$,$y$ are words without letters $A$,$B$.
\item $A^\pm B^\pm xA^\pm C^\pm yB^\pm C^\pm z \to B^\pm A^\pm xC^\pm A^\pm yC^\pm B^\pm z$.
$x$,$y$,$z$ are words without letters $A$,$B$,$C$.
\end{enumerate}
\end{definition}

These moves correspond to moves of curves illustrated in Figure \ref{fig:move}.
\begin{figure}[h]
\begin{center}
\includegraphics{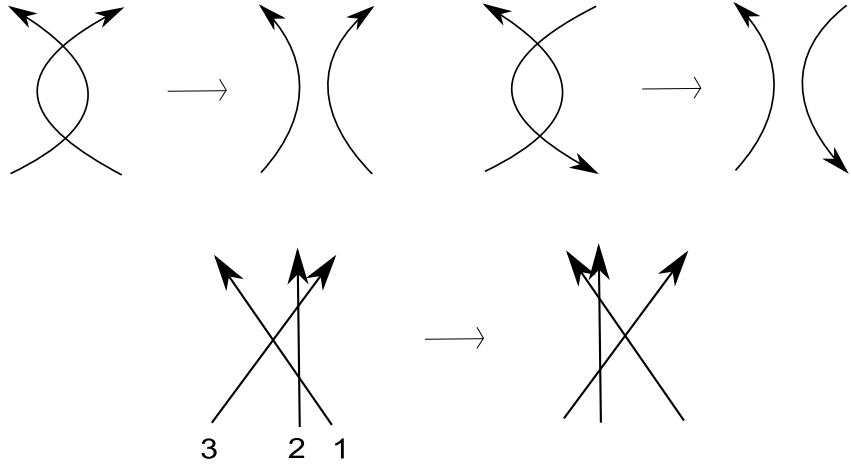}
\caption{Regular homotopy moves}
\label{fig:move}
\end{center}
\end{figure}

The invariants $J_s^+$,$J_s^-$,$St_s$ of generic spherical curves were introduced in \cite{P}.
These invariants are additive with respect to connected sum of curves and 
independent of the choice of orientation for the curve.
$J_s^\pm$ and $St_s$ are defined by their behavior the moves and inverse moves in Figure \ref{fig:move}.

\begin{definition}[Arnold's basic invariants]
$J_s^+$ does not change under the second and third regular homotopy moves or inverse moves
but increases (resp. decrease) by $2$ under the first regular homotopy move (resp. inverse move).

$J_s^-$ does not change under the first and third regular homotopy moves or inverse moves
but decrease (resp. increase) by $2$ under the second regular homotopy move (resp. inverse move).

$St_s$ does not change under the first and second regular homotopy moves or inverse moves
but increase (resp. decrease) $1$ under the third regular homotopy move (resp. inverse move).

Normalizing conditions for Arnoldfs invariants are given as follow.
Let $w_i$  be a signed word $A_1A_1A_2A_2...A_iA_i$
 (or $\bar{A_1}\bar{A_1}\bar{A_2}\bar{A_2}...\bar{A_i}\bar{A_i}$)  
corresponding to a plane curve $K_i$ , $i=1,2,... $, 
\begin{eqnarray*}
J_s^+(K_i) &=& \frac{(i-1)^2}{2}, \\
J_s^-(K_i) &=& \frac{(i-2)^2}{2} - \frac{3}{2},\\
St_s(K_i) &=& -\frac{(i-1)^2}{4}.
\end{eqnarray*}
\end{definition}

\begin{theorem}
Arnold's basic invariants are finite type invariants of degree $2$.
\end{theorem}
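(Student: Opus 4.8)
The plan is to prove the sharper, purely combinatorial statement that each of $J_s^+$, $J_s^-$, $St_s$ can be written, as a function on signed words, in the ``at most quadratic'' form
\[
v(w)=C+\sum_{i}\alpha(i)+\sum_{\{i,j\}}\beta(i,j),
\]
where the first sum runs over the letters $i$ of $w$, the second over unordered pairs $\{i,j\}$ of letters, and the weights $\alpha(i)$, $\beta(i,j)$ depend only on the local type of a letter (its sign) resp.\ of a pair (the two signs together with whether the pair is nested or interleaved in $w$). The point of this shape is that such a $v$ is automatically of degree $\le 2$: for a word $w$ carrying $k\ge 3$ singular letters, the prolongation is the alternating sum $\tilde v(w)=\sum_{S}(-1)^{k-|S|}v(w_S)$ over the $2^{k}$ choices $S$ of keeping or deleting the singular crossings. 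A summand $\beta(i,j)$ contributes to $v(w_S)$ exactly when both $i$ and $j$ survive in $w_S$, so its total contribution factors as $\beta(i,j)\,(1-1)^{m}$ with $m\ge k-2\ge 1$, hence vanishes; the same cancellation annihilates every $\alpha(i)$ and the constant $C$. Thus $\tilde v(\mathcal X^{3})=0$, which is exactly the claim that $v$ is a finite type invariant of degree $\le 2$.

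It then remains to exhibit such an expression for each Arnold invariant. Here the structure formula displayed just before the theorem serves as a template (not as a logical input, since that formula presupposes degree $2$): I would take the candidate
\[
v(w)=\lambda_0+\lambda_1\,a_1(w)+\sum_{i}\mu_i\,b_i(w)+\sum_{j}\nu_j\,c_j(w),
\]
where $a_1(w)$ is the number of letters, each $b_i(w)$ counts the nested pairs of one signed type and each $c_j(w)$ counts the interleaved pairs of one signed type, and $\lambda_0,\lambda_1,\mu_i,\nu_j$ are constants to be determined. Every counting function here is precisely a sum over single letters resp.\ over pairs, so the candidate already has the quadratic shape above; the only task is to fix the constants. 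I would pin them down by imposing the two characterizing properties of Arnold's invariants: the normalizing values on the curves $K_i$, and the prescribed jumps under the three regular homotopy moves. Computing how each counting function changes under a move turns this into a finite linear system: the first move inserts an interleaved opposite-sign pair and so shifts the $c$-counts (together with cross terms against the letters of $x,y$), the second move inserts a nested opposite-sign pair and shifts the $b$-counts, and the third move permutes three letters, altering several counts simultaneously. Matching the jumps $(\pm2,0,0)$ for $J_s^{+}$, $(0,\mp2,0)$ for $J_s^{-}$, and $(0,0,\pm1)$ for $St_s$, together with the normalization, determines the constants and shows the candidate coincides with the Arnold invariant.

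The hard part will be the third (triple point) move in the verification for $St_s$. Unlike the first two, it does not merely append a pair: passing between $B^{\pm}A^{\pm}xC^{\pm}A^{\pm}yC^{\pm}B^{\pm}z$ and $A^{\pm}B^{\pm}xA^{\pm}C^{\pm}yB^{\pm}C^{\pm}z$ reorders the three letters $A,B,C$ and thereby switches several of their mutual nested/interleaved relations as well as their relations with the letters buried inside $x,y,z$. The delicate step is the sign- and position-bookkeeping needed to confirm that, once all these individual count changes are weighted by the chosen constants and summed, the net change is exactly $+1$ for $St_s$ and exactly $0$ for $J_s^{\pm}$, while the (easier) analogous check confirms that each of $J_s^{+}$ and $J_s^{-}$ is unchanged by the two moves that are supposed to fix it. Once this single computation is completed, the quadratic shape of the resulting formula yields degree $\le 2$ immediately via the cancellation of the first paragraph, finishing the proof.
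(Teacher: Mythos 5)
Your opening reduction is correct and cleanly proved: writing the prolongation of a word with $k$ singular letters as the alternating sum $\tilde v(w)=\sum_{S}(-1)^{k-|S|}v(w_S)$ over the $2^k$ resolutions, each weight $C$, $\alpha(i)$, $\beta(i,j)$ picks up a factor $(1-1)^m$ with $m\ge k-2$, so any invariant of your quadratic counting shape vanishes on all words with $k\ge 3$ singular letters. This is in substance the same mechanism the paper relies on: its displayed expansion of a degree-$2$ spherical invariant into the counting functions $a_1(w)$, $b_i(w)$, $c_j(w)$ is exactly your quadratic form, with the weights being the prolongation values $v((AA)^*)$, $v((AABB)^*)$, etc.; your inclusion--exclusion argument is in fact more rigorous than anything the paper writes down for this step.

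The genuine gap is in the second half, and it is precisely the part you defer. The paper does not reconstruct the counting formulas for $J_s^+$, $J_s^-$, $St_s$ from the move axioms; it cites Polyak's Gauss diagram formulas \cite{P}, which are exactly the statement that such weighted pair counts exist, and then only records the weights $(p,q,r)=(-\frac{1}{2},1,-3)$, $(-\frac{3}{2},1,-3)$, $(\frac{1}{4},-\frac{1}{2},\frac{1}{2})$. You propose instead to re-derive these formulas by solving for the weights from the normalization on the $K_i$ and the jumps under the three moves, and for that plan to close you must actually show the resulting system is solvable: (i) under moves 1 and 2 the inserted letters $A,B$ also create pairs with every letter of $x$ and $y$, and these cross terms cancel only if the weights satisfy sign-antisymmetry relations (this is what the paper's assignments $q,q,-q$ for the nested types and $r,r,-r,-r$ for the interleaved types encode); (ii) the move-3 bookkeeping, which you yourself flag as the hard part, must come out to exactly $0$, $0$, $+1$; (iii) you need base-point independence of your candidate on spherical words, and the fact that generic spherical curves fall into exactly two regular homotopy classes (represented among the $K_i$), so that matching jumps and normalization forces equality with Arnold's invariants everywhere, not just on the $K_i$. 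Items (i)--(iii) constitute the substance of Polyak's theorem, so as written your argument establishes only the conditional statement ``if Arnold's invariants admit a quadratic counting formula, then they have degree at most $2$.'' The quickest completion is the paper's own route: quote Polyak's formulas as the input, after which your first paragraph finishes the theorem immediately.
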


\begin{proof}
Let $v_s$ be a finite type invariant of signed words for spherical curves of degree $2$,
and 

\[v_s(\phi)=0,\ v_s((AA)^*)=p,\]
\[v_s((AABB)^*)=v_s((A\bar{B}\bar{B}A)^*)=-v_s((AA\bar{B}\bar{B})^*)=q,\]
\[v_s((ABAB)^*)=v_s((\bar{A}\bar{B}\bar{A}\bar{B})^*)=-v_s((A\bar{B}A\bar{B})^*)=-v_s((\bar{A}B\bar{A}B)^*)=r,\]

Let $\Gamma$ be a curve corresponding to a signed word $w$.  
By using Polyak's formulation of the Arnold's basic invariants \cite{P},
we represent the Arnold's basic invariants as
\begin{eqnarray*}
J_s^+(\Gamma ) &=& v_s(w) \hspace{1em} (p=-\frac{1}{2},\ q=1,\ r=-3),\\
J_s^-(\Gamma ) &=& v_s(w) \hspace{1em} (p=-\frac{3}{2},\ q=1,\ r=-3),\\
St_s(\Gamma ) &=& v_s(w) \hspace{1em} (p=\frac{1}{4},\ q=-\frac{1}{2},\ r=\frac{1}{2}).
\end{eqnarray*}
That is to say, the finite type invariants $v_s$ parameterized by three specific vectors $(p, q, r) = $
$(-\frac{1}{2},1,-3)$,$(-\frac{3}{2},1,-3)$,$(\frac{1}{4},-\frac{1}{2},\frac{1}{2})$ are
equivalent to the Arnold's basic invariants $J_s^+$, $J_s^-$ and $St_s$.
\end{proof}

The invariant can be extended to a invariant $J^\pm$,$St$ for plane curves if rotation number $i$ of curves is defined,
by $J^\pm = J_s^\pm - \frac{1}{2}i^2$ , $St = St_s + \frac{1}{4}i^2$.

\bibliographystyle{plain}

\end{document}